\DeclareMathAlphabet{\mathbbm}{U}{bbm}{m}{n}
\definecolor{CadetBlue}{cmyk}{0.62, 0.57, 0.23, 0 }
\definecolor{black}{cmyk}{1, 0.5, 0, 0 }
\definecolor{RedViolet}{cmyk}{0.07, 0.9, 0, 0.34 }
\definecolor{SeaGreen}{cmyk}{0.69, 0, 0.5, 0}
\DeclareMathAlphabet{\mathpzc}{OT1}{pzc}{m}{it}
\newcommand{\F}{\mathbb F}
\newcommand{\N}{\mathbb N}
\newcommand{\Q}{\mathbb Q}
\newcommand{\Z}{\mathbb Z}
\newcommand{\e}{\upvarepsilon}
\newtheorem{theo}{Theorem}
\newtheorem{lemm}{Lemma}
\theoremstyle{definition}
\theoremstyle{remark}
\newtheorem{note}{Note}
\title[Quantum $j$-Invariant in Positive Characteristic II]{Quantum $j$-Invariant in Positive Characteristic II: Formulas and Values at the Quadratics}
\author{L. Demangos}
\address{Instituto de Matem\'{a}ticas -- Unidad Cuernavaca, Universidad
Nacional Autonoma de M\'{e}xico, Av. Universidad S/N, C.P. 62210
Cuernavaca, Morelos, M\'{e}xico}
\curraddr{Department of Mathematical Sciences (Mathematics Division),
University of Stellenbosch,
Private Bag X1,
Matieland 7602,
South Africa}
\email{l.demangos@gmail.com}
\author{T.M. Gendron}
\address{Instituto de Matem\'{a}ticas -- Unidad Cuernavaca, Universidad
Nacional Autonoma de M\'{e}xico, Av. Universidad S/N, C.P. 62210
Cuernavaca, Morelos, M\'{e}xico}
\email{tim@matcuer.unam.mx}
\subjclass{Primary 11R58,11F03,11R11; Secondary 11K60}
\keywords{quantum j-invariant, global function fields, quadratic extensions, Diophantine approximation}
\begin{document}
\vspace{2cm}

 \maketitle
 
 \begin{abstract}  
In this sequel to \cite{DGI}, the multi-valued quantum $j$-invariant in positive characteristic is studied at quadratic elements. For every quadratic $f$, 
an explicit expression for each of the values of $j^{\rm qt}(f)$ is given as a limit of rational functions of $f$.  It is proved that the number of values of $j^{\rm qt}(f)$
is finite.
  \end{abstract}

\section{Introduction}

Let $\F_{q}$ be the field with $q$ elements, $q=p^{r}$ a prime power, $k=\F_{q}(T)$
the function field over $\F_{q}$, $k_{\infty}$ the completion 
of $k$ with respect to the valuation $v(f)=-\deg_{T}(f)$. 
In \cite{DGI}, the quantum $j$-invariant was introduced as a multi-valued modular function
\[ j^{\rm qt}: {\rm GL}_{2}(A)\backslash k_{\infty} \multimap k_{\infty} \cup \{ \infty\}\]
obtained as the limit of a sequence of approximating functions 
\begin{align}\label{approximants} j_{\e}: k_{\infty} \longrightarrow k_{\infty} \cup \{ \infty\} \end{align}
as $\e\rightarrow 0$.  Explicit formulas for the approximants (\ref{approximants}) in terms of the sequence $\{ {\tt q}_{i}\}\subset A$ of best approximations of $f$ were 
obtained and using them,
it was shown that $f\in k$ if and only if eventually $ j_{\e}(f)=\infty$. 

In this paper we consider $j^{\rm qt}(f)$ in the special case of $f\in k_{\infty}$ quadratic.  Here we are able to derive explicit formulas
not just for the approximants but for all the values of $j^{\rm qt}(f)$, each of which expressed 
as a limit of certain rational functions of $f$.
Using these formulas we are able to prove that for $f$ quadratic, $\# j^{\rm qt}(f)<\infty$.

In what follows, we fix notation used in \cite{DGI}.

\section{Values at Quadratic Units}

Fix $a\in A-\F_{q}$, $b\in\F_{q}^{\times}$ with 
\[ d:=\deg a>0\] and write $D=a^{2} +4b$.   
The solutions $f,f'$ of $X^{2}-aX-b=0$ are quadratic units, and every quadratic unit in $k_{\infty}-k$
occurs in this way.  
Since $ff'=- b$, we have $|ff'|=1$. Moreover, since  $f+f'=a$, we have as well that $|f|, |f'|\not=1$.  Therefore, if we assume that $|f|>1$, then 
\[ |f|=|a|=q^{d}>1 \quad \text{and}\quad |f'|=|b/a|=q^{-d}<1. \]

Consider the recursive sequence
\[ {\tt Q}_{0}=1, {\tt Q}_{1}=a,\;\dots, \;{\tt Q}_{n+1}=a{\tt Q}_{n}+b{\tt Q}_{n-1} .\]
Note that when $b=1$, ${\tt Q}_{n}={\tt q}_{n}$ is the sequence of best approximations for $f=[a,a,\dots ]$; see \S 3 of \cite{DGI}.
It is clear that for all $n$
\[  
|{\tt Q}_{n}|=|a|^{n}= q^{dn}. \]
Recall 
{\it Binet's formula} 
\[{\tt Q}_{n} =\frac{ f^{n+1}-(f')^{n+1}}{\sqrt{D}},\quad n=0,1,\dots .\]
which, while usually stated over $\Q$, is equally true in this setting, by a very simple induction on $n$.  The root $\sqrt{D}$ is chosen
so that in odd characteristic, $f=(a+\sqrt{D})/2$; in even characteristic, we simply take $\sqrt{D}=a$.

Recall that $\|x\|=$ distance of $x$ to the nearest element of $A$; see \cite{DGI}, \S 2 for this and other relevant notation.
Binet's formula immediately gives 
\begin{align}\label{Q_nerrorestimate} \| {\tt Q}_{n}f\| =  q^{ -(n+1) d } : \end{align}
indeed, 
\begin{align}\label{Q_nerrorestimate2}  |{\tt Q}_{n}f - {\tt Q}_{n+1}| = 
 \frac{|f' |^{n+1} |f-f'|}{|\sqrt{D}|}  =
 |f' |^{n+1}
=q^{ -(n+1) d }<1 .
\end{align}
In particular,  $ {\tt Q}_{n}^{\perp} = {\tt Q}_{n+1}$, where $\uplambda^{\perp}$ denotes the element of $A$ closest to $\uplambda f$.

\vspace{3mm}

The remainder of this section is devoted to deriving an explicit formula for each value of $j^{\rm qt}(f)$.
In view of our need to work with monic polynomials, we make the following modification to the above sequence.
For each $n$, we write $\bar{\tt Q}_{n}$ for the unique monic polynomial obtained as $c_{n}{\tt Q}_{n}$ for some $c_{n}\in\F_{q}^{\times}$.
Then $|\bar{\tt Q}_{n}|=|{\tt Q}_{n}|$ and $\| \bar{\tt Q}_{n}f\|=\| {\tt Q}_{n}f\| $ since $\bar{\tt Q}_{n}^{\perp}=c_{n} {\tt Q}_{n}^{\perp}$.
(N.B. $\bar{\tt Q}_{n}^{\perp}$ is not necessarily monic.)
Note that if we choose $c\in\F_{q}^{\times}$ so that $ca$ is monic then $c_{n}=c^{n}$.
 It follows from Binet's formula that 
\begin{align}\label{barbinet}\bar{\tt Q}_{n} =c^{n}\frac{ f^{n+1}-(f')^{n+1}}{\sqrt{D}}= \frac{ \bar{f}^{n+1}-(\bar{f}')^{n+1}}{c\sqrt{D}},\quad
\bar{f}:=cf,\;\; \bar{f}':=cf'.\end{align}

The set
\begin{align}\label{Tpowbasis} 
\mathcal{B}=\{ T^{d-1}\bar{\tt Q}_{0}, \dots , T\bar{\tt Q}_{0}, \bar{\tt Q}_{0} ; T^{d-1}\bar{\tt Q}_{1}, \dots , T\bar{\tt Q}_{1}, \bar{\tt Q}_{1};\dots  \}\end{align}
is a basis for $A$ as an $\F_{q}$-vector space: as the degree map $\deg:\mathcal{B}\rightarrow\N=\{ 0,1,\dots \}$ is a bijection.
The order in which we have presented the elements of $\mathcal{B}$
 corresponds to decreasing errors:
for $0\leq l\leq d-1$ 
and $n\geq 0$ 
\begin{align}\label{basiserrors}\| T^{l}\bar{\tt Q}_{n}f \| =|T^{l}\bar{\tt Q}_{n}f-T^{l}\bar{\tt Q}_{n+1}|= q^{l-(n+1)d}<1.  \end{align}
In particular, $(T^{l}\bar{\tt Q}_{n})^{\perp}= T^{l} \bar{\tt Q}_{n}^{\perp}$, and the map \begin{align}\label{errorbijection}  \mathcal{B}\longrightarrow q^{-\N},\quad  T^{l}\bar{\tt Q}_{n} \longmapsto
\| T^{l}\bar{\tt Q}_{n}f \| \end{align}
defines a bijection between $\mathcal{B}$ and the set of possible errors. 

Write 
 \[ \mathcal{B}(i) = \{ T^{d-1}\bar{\tt Q}_{i},\dots , \bar{\tt Q}_{i}\}\]  
 for the $i$th block of $\mathcal{B}$.
Furthermore, for $0\leq \tilde{d}\leq d-1$, denote
\[ \mathcal{B}(i)_{\tilde{d}} = \{T^{\tilde{d}}\bar{\tt Q}_{i},\dots , \bar{\tt Q}_{i}\}.\]

\begin{lemm}\label{explicitLambda}  Let $l\in \{ 0,\dots ,d-1\}$
and write
\[ d_{l}=d-1-l.\]   Then 
\begin{align*} 
 \Uplambda_{q^{-Nd-l}}(f )={\rm span}_{\F_{q}} ( \mathcal{B}(N)_{d_{l}},\mathcal{B}(N+1),\dots   ) .
\end{align*}
\end{lemm}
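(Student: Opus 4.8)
The plan is to realize both sides as $\F_q$-subspaces of $A$ and to match them via the basis $\mathcal{B}$. Recall from \cite{DGI} that $\Uplambda_{\e}(f)=\{\uplambda\in A:\|\uplambda f\|<\e\}$. This is an $\F_q$-subspace: if $\{x\}$ denotes the unique representative of $x+A$ with $|\{x\}|<1$, then $x\mapsto\{x\}$ is $\F_q$-linear and $\|x\|=|\{x\}|$, so $\|\cdot\|$ is invariant under scaling by $\F_q^{\times}$ and satisfies the ultrametric inequality $\|x+y\|\le\max(\|x\|,\|y\|)$, with equality whenever $\|x\|\ne\|y\|$. Consequently an $\F_q$-linear combination of elements each of error $<\e$ again has error $<\e$, and more precisely the error of a combination equals that of its unique term of largest error.

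First I would carry out the purely combinatorial identification of which basis elements lie in $\Uplambda_{q^{-Nd-l}}(f)$. By (\ref{basiserrors}), $\|T^{l'}\bar{\tt Q}_{n}f\|=q^{l'-(n+1)d}$, so the inequality $\|T^{l'}\bar{\tt Q}_{n}f\|<q^{-Nd-l}$ is equivalent to $l+l'<(n+1-N)d$. For $n\le N-1$ the right-hand side is $\le 0$ while $l+l'\ge 0$, so it fails; for $n=N$ it reads $l'\le d-1-l=d_{l}$, selecting exactly $\mathcal{B}(N)_{d_{l}}$; and for $n\ge N+1$ it holds automatically, since $l+l'\le 2d-2<2d\le(n+1-N)d$, selecting all of $\mathcal{B}(N+1),\mathcal{B}(N+2),\dots$. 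Thus the basis elements of error $<q^{-Nd-l}$ are precisely those in $\mathcal{B}(N)_{d_{l}}\cup\mathcal{B}(N+1)\cup\cdots$.

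The inclusion $\supseteq$ then follows at once: each generator on the right-hand side has error $<q^{-Nd-l}$ by the previous step, and $\Uplambda_{q^{-Nd-l}}(f)$ is a subspace, so it contains the full span. For $\subseteq$, take $\uplambda\in\Uplambda_{q^{-Nd-l}}(f)$ and expand $\uplambda=\sum_{j}c_{j}\beta_{j}$ with $c_{j}\in\F_q^{\times}$ and the $\beta_{j}\in\mathcal{B}$ distinct. The essential input is the bijection (\ref{errorbijection}): distinct basis elements have distinct errors. Hence there is a unique appearing $\beta_{j_{0}}$ of maximal error, and by scaling-invariance together with the strict ultrametric equality, $\|\uplambda f\|=\|\beta_{j_{0}}f\|$. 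Since $\|\uplambda f\|<q^{-Nd-l}$, every appearing $\beta_{j}$ has error $\le\|\beta_{j_{0}}f\|<q^{-Nd-l}$, so lies in $\mathcal{B}(N)_{d_{l}}\cup\mathcal{B}(N+1)\cup\cdots$; therefore $\uplambda\in{\rm span}_{\F_q}(\mathcal{B}(N)_{d_{l}},\mathcal{B}(N+1),\dots)$.

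The one genuinely delicate point is the non-cancellation step in the proof of $\subseteq$: one must rule out the possibility that a large-error basis term is cancelled by the remaining terms. This is precisely what the distinctness of errors from (\ref{errorbijection}), combined with the strict form of the ultrametric inequality, provides. Everything else is bookkeeping with the error formula (\ref{basiserrors}).
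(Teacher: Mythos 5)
Your proof is correct and follows essentially the same route as the paper's: verify that the generators on the right-hand side satisfy the error bound, then use the bijection (\ref{errorbijection}) between $\mathcal{B}$ and the set of errors to rule out any linear combination involving an excluded basis element. The only difference is expository — you spell out the strict ultrametric non-cancellation argument that the paper compresses into the single sentence ``no linear combination of the excluded basis elements could appear'' — which is a welcome clarification but not a different method.
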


\begin{proof}  First observe that 
\begin{align*}{\rm span}_{\F_{q}} ( \mathcal{B}(N)_{d_{l}},\mathcal{B}(N+1),\dots   ) \subset  \Uplambda_{q^{-Nd-l}}(f ).\end{align*}
Indeed, for $T^{\tilde{d}}\bar{\tt Q}_{N+r}$ with $\tilde{d}\leq d-1$, and $\tilde{d}\leq d_{l}$ if $r=0$, we have 
\[ \| T^{\tilde{d}}\bar{\tt Q}_{N+r}f\| \leq q^{d_{l}}\cdot q^{-(N+1)d} =q^{-Nd-l-1} < q^{-Nd-l}.\]
Moreover, by (\ref{basiserrors}), $ \Uplambda_{q^{-Nd-l}}(f )$ contains no other elements of $\mathcal{B}$. 
In view of the bijection (\ref{errorbijection}), no linear combination of the excluded basis elements could appear
in $ \Uplambda_{q^{-Nd-l}}(f )$.  This proves the equality in the statement of the Lemma.
\end{proof}

We recall the main definitions established in \cite{DGI}.  For each $\upvarepsilon$, 
 $\Uplambda^{\rm mon}_{\upvarepsilon}(f)$ is the subset of monic polynomials in $\Uplambda_{\upvarepsilon}(f) $.
We defined  
\[ \upzeta_{f,\upvarepsilon}(n):=\sum_{\uplambda\in \Uplambda^{\rm mon}_{\upvarepsilon}(f)-\{0\}}\uplambda^{-n}, \quad n\in \N , \]
and 
\[  j_{\upvarepsilon}(f) = \frac{1}{\frac{1}{T^{q}-T}  -J_{\upvarepsilon}(f) }, \]
where 
\[  J_{\upvarepsilon}(f)= \frac{T^{q^{2}}-T}{(T^{q}-T)^{q+1}} \cdot \tilde{J}_{\upvarepsilon}(f) ,\quad 
\tilde{J}_{\upvarepsilon}(f):=\frac{\upzeta_{f,\e}(q^{2}-1)}{\upzeta_{f,\e}(q-1)^{q+1}}.
\]
Then $j^{\rm qt}(f)$ is the set of limits of $ j_{\upvarepsilon}(f)$ for $\upvarepsilon\rightarrow 0$.
It follows from Lemma \ref{explicitLambda} that to calculate the values of $j^{\rm qt}(f )$, 
it suffices to consider the possible limits of $j_{\upvarepsilon}(f)$ formed from the spans of the initial truncations
of $\mathcal{B}$.   

Let $\upvarepsilon=q^{-Nd-l}$.
As in \cite{DGI}, we decompose
\[  \Uplambda^{\rm mon}_{\upvarepsilon} (f)=  \Uplambda^{\rm bas}_{\upvarepsilon}(f)\bigsqcup  \Uplambda^{\rm non-bas}_{\upvarepsilon}(f)  \]
where
\[  \Uplambda^{\rm bas}_{\upvarepsilon}(f)=\{ \mathcal{B}(N)_{d_{l}},\mathcal{B}(N+1), \dots \}  \]
and \[  \Uplambda^{\rm non-bas}_{\upvarepsilon}(f)=  \Uplambda^{\rm mon}_{\upvarepsilon} (f)- \left(\Uplambda^{\rm bas}_{\upvarepsilon} (f)\cup \{0\}\right).\]
Any element $\uplambda\in  \Uplambda^{\rm non-bas}_{\upvarepsilon}(f)$ may therefore be written in the form
\[\uplambda =  c_{0}(T)\bar{\tt Q}_{N} +c_{1}(T)\bar{\tt Q}_{N+1}+ \cdots + c_{m}(T) \bar{\tt Q}_{N+m},\]
where the $c_{i}(T)\in A$ are polynomials that are not all zero and satisfy the following conditions
\begin{itemize}
\item[I.] $c_{m}(T) $ is monic and if $\uplambda =c_{m}(T) \bar{\tt Q}_{N+m}$, $c_{m}(T)\not=T^{j}$ for all $j\leq d-1$. 
 \item[II.] $\deg (c_{i}(T))\leq d-1$, $i=0,\dots , m$.
 \item[III.]  $\deg (c_{0}(T))\leq d_{l}$.
\end{itemize}
In view of this characterization, sums over $\uplambda\in \Uplambda^{\rm non-bas}_{\upvarepsilon}(f )$ may be understood as sums indexed by tuples
 $c_{0}(T),\dots ,c_{m}(T)$ subject to conditions I, II and III, and so we will abbreviate
\[ \sum_{\text{Conditions} \atop \text{I,II,III}}:=\sum_{\Uplambda^{\rm non-bas}_{\upvarepsilon}(f )}.\] 
Using this notation, we may now express 
\begin{align}\label{Jepsilonffirstform} \tilde{J}_{\upvarepsilon}(f)=\frac{ \sum_{\uplambda\in \Uplambda^{\rm bas}_{\upvarepsilon} (f ) }\uplambda^{1-q^{2}}  +
\sum_{\text{Conditions} \atop \text{I,II,III}} 
\big( \sum_{i=0}^{m} c_{i}(T)\bar{\tt Q}_{N+i}\big)^{1-q^{2}}   }{ \left(\sum_{\uplambda\in \Uplambda^{\rm bas}_{\upvarepsilon} (f ) }\uplambda^{1-q}
+
\sum_{\text{Conditions} \atop \text{I,II,III}}
\big( \sum_{i=0}^{m} c_{i}(T)\bar{\tt Q}_{N+i}\big)^{1-q}
 \right)^{q+1} }.\end{align}

Write
\[ \upzeta_{T,l}(n) := 1+ T^{-n} + \cdots + T^{-nd_{l}}, \quad
 \upzeta_{T}(n) :=  \upzeta_{T,0}(n),\]
then define 
\begin{align*}H_{l}(n) := \upzeta_{T,l}(n) +\sum_{\text{Conditions I,II,III}\atop c_{0}(T)\not=0} \left( \sum_{i= 0}^{m} c_{i}(T)\bar{f}^{i}\right)^{-n}  \end{align*}
and
\begin{align*} H(n) :=\frac{ \upzeta_{T}(n)}{\bar{f}^{n}-1} +\sum_{ \text{Conditions}\atop \text{I,II}} \left( \sum_{i=1}^{m} c_{i}(T)\bar{f}^{i}\right)^{-n}.\end{align*}
Now let  \begin{align*}  \tilde{J}(f )_{l}:= \frac{ 
H_{l}(q^{2}-1)+ H(q^{2}-1)
  }{ 
 \left(  
H_{l}(q-1)+  H(q-1)\right)^{1+q}}.\end{align*}
Note that $ \tilde{J}(f )_{l}$  converges: indeed both numerator and denominator have absolute value $1$, and  $| \tilde{J}(f )_{l}|=1$.
Finally, we write 
\[ J(f)_{l}:=\frac{T^{q^{2}}-T}{(T^{q}-T)^{q+1}}\cdot \tilde{J}(f)_{l} .\]
\begin{theo}\label{finitenesstheospecquad}  Let $f $ be a quadratic unit which is a solution of $X^{2}-aX-b$ with $\deg a=d>0$ and $\deg b=0$. Then $j^{\rm qt}(f)$ has precisely
$d=\deg a=\log_{q} |\sqrt{D}|$ values, and they are
\[  j^{\rm qt}(f )=
 \left\{ 
 j(f)_{l} := \frac{1}{\frac{1}{T^{q}-T}  -J(f)_{l} }
\right\}_{l=0}^{d-1} .\]
\end{theo}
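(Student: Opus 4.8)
The plan is to show that the limit of $j_{\upvarepsilon}(f)$ depends only on the residue of $\log_{q}(1/\upvarepsilon)$ modulo $d$, and that for each residue $l$ this limit equals $j(f)_{l}$. Writing every admissible error as $\upvarepsilon=q^{-Nd-l}$ with $N\geq 0$ and $0\leq l\leq d-1$, Lemma \ref{explicitLambda} together with the basic/non-basic decomposition of $\Uplambda^{\rm mon}_{\upvarepsilon}(f)$ already recorded above reduces the determination of $j^{\rm qt}(f)$ to understanding the behaviour of $\upzeta_{f,\upvarepsilon}(n)$ for $n\in\{q-1,q^{2}-1\}$ as $N\to\infty$ with $l$ fixed. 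Since the errors $q^{-m}$ are partitioned into the $d$ residue classes $m\equiv l\pmod d$, this will exhibit $\{j(f)_{l}\}_{l=0}^{d-1}$ as the full set of accumulation points.

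First I would renormalize. As $\bar{\tt Q}_{N}$ is monic with $|\bar{\tt Q}_{N}|=q^{dN}$, I factor $\bar{\tt Q}_{N}^{-n}$ out of $\upzeta_{f,\upvarepsilon}(n)$ and study $\bar{\tt Q}_{N}^{n}\upzeta_{f,\upvarepsilon}(n)$. Binet's formula (\ref{barbinet}), combined with $|\bar{f}'|<1<|\bar{f}|$, gives $\bar{\tt Q}_{N+i}/\bar{\tt Q}_{N}\to\bar{f}^{i}$ as $N\to\infty$ for each fixed $i$; hence for a non-basic $\uplambda=\sum_{i=0}^{m}c_{i}(T)\bar{\tt Q}_{N+i}$ the scaled term $\bar{\tt Q}_{N}^{n}\uplambda^{-n}$ converges to $(\sum_{i=0}^{m}c_{i}(T)\bar{f}^{i})^{-n}$. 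Summing the basic contributions produces $\upzeta_{T,l}(n)$ from the truncated block $\mathcal{B}(N)_{d_{l}}$ and the geometric series $\upzeta_{T}(n)\sum_{r\geq 1}\bar{f}^{-rn}=\upzeta_{T}(n)/(\bar{f}^{n}-1)$ from the full blocks, while sorting the non-basic contributions by whether $c_{0}\neq 0$ or $c_{0}=0$ produces exactly the remaining sums defining $H_{l}(n)$ and $H(n)$. Thus $\bar{\tt Q}_{N}^{n}\upzeta_{f,\upvarepsilon}(n)\to H_{l}(n)+H(n)$. The decisive bookkeeping is that the powers of $\bar{\tt Q}_{N}$ cancel in $\tilde{J}_{\upvarepsilon}(f)$: the numerator carries $\bar{\tt Q}_{N}^{-(q^{2}-1)}$ and the denominator $\bar{\tt Q}_{N}^{-(q-1)(q+1)}=\bar{\tt Q}_{N}^{-(q^{2}-1)}$, so $\tilde{J}_{\upvarepsilon}(f)=\bar{\tt Q}_{N}^{q^{2}-1}\upzeta_{f,\upvarepsilon}(q^{2}-1)\,\big/\,(\bar{\tt Q}_{N}^{q-1}\upzeta_{f,\upvarepsilon}(q-1))^{q+1}\to\tilde{J}(f)_{l}$. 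Feeding this through the continuous, injective maps defining $J_{\upvarepsilon}$ and $j_{\upvarepsilon}$ (the limits being well defined since $|\tilde{J}(f)_{l}|=1$) yields $j_{\upvarepsilon}(f)\to j(f)_{l}$.

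I expect the main obstacle to lie in two places. The first is justifying the interchange of the limit $N\to\infty$ with the infinite summation over non-basic $\uplambda$. Here the ultrametric is an ally: because $c_{m}$ is monic and dominates, $|\uplambda|\geq q^{d(N+m)}$, giving the uniform bound $|\bar{\tt Q}_{N}^{n}\uplambda^{-n}|\leq q^{-ndm}$. Thus the tail in $m$ is uniformly small while each fixed-$m$ layer is a finite sum to which the termwise limit applies; this legitimizes passing to the limit and collecting terms into $H_{l}(n)+H(n)$, and simultaneously re-proves that $\tilde{J}(f)_{l}$ is well defined.

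The second, and genuinely more delicate, point is the lower bound $\#j^{\rm qt}(f)=d$, i.e. that the values $j(f)_{l}$ are pairwise distinct. Since $H(n)$ is independent of $l$ while $H_{l}(n)$ loses the top terms of $\upzeta_{T,l}(n)$ and tightens Condition III as $l$ grows, I would, for $l<l'$, isolate the leading discrepancy (of absolute value $q^{-n(d_{l'}+1)}$) by which $H_{l}(n)$ exceeds $H_{l'}(n)$, check that no non-basic term of the same size cancels it—using Condition I to exclude the offending pure power $c_{0}=T^{d_{l'}+1}$, which belongs to the basic part instead—and finally verify that this discrepancy survives the ratio defining $\tilde{J}(f)_{l}$. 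Establishing this non-cancellation cleanly, rather than the convergence itself, is where the real work lies.
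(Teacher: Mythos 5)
Your proposal is correct and follows essentially the same route as the paper's proof: fix $l$, let $N\to\infty$ along $\upvarepsilon=q^{-Nd-l}$, split $\Uplambda^{\rm mon}_{\upvarepsilon}(f)$ into basic and non-basic parts, and renormalize so that the homogeneity of degree $1-q^{2}$ makes the $N$-dependent scale cancel between numerator and denominator --- the paper substitutes via Binet and divides by $\bar{f}^{(1-q^{2})(N+1)}$, which is the same bookkeeping as your factoring out $\bar{\tt Q}_{N}^{-n}$ --- after which the termwise limits assemble into $H_{l}(n)+H(n)$, the paper's justification being that $1-(\bar{f}'/\bar{f})^{N+i+1}\to 1$ uniformly in $i$, a point your ultrametric tail bound $|\bar{\tt Q}_{N}^{n}\uplambda^{-n}|\leq q^{-ndm}$ makes more explicit. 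One caveat on your final paragraph: the pairwise distinctness of the values $j(f)_{0},\dots,j(f)_{d-1}$, which you rightly flag as the delicate point and leave as a sketch, is not established in the paper's proof either --- the argument there ends with $\tilde{J}_{q^{-Nd-l}}(f)\to\tilde{J}(f)_{l}$, so ``precisely $d$ values'' is justified only up to possible coincidences among the $j(f)_{l}$; your non-cancellation plan (where one must indeed worry that same-size power sums over monic polynomials can cancel in characteristic $p$) thus addresses a gap the paper leaves open rather than one your argument creates, and its absence should not be counted against your proof relative to the paper's.
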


\begin{proof}  The conjugate solution satisfies $f'=-bf^{-1}$ with $b\in\F_{q}^{\times}$, so $f'$ is ${\rm GL}_{2}(A)$-equivalent to $f$, hence by modularity,
$j^{\rm qt}(f ')= j^{\rm qt}(f )$. Thus, in what follows, we will suppose $f$ satisfies $|f|>1$.  As above,  $\upvarepsilon = q^{-Nd-l}$.  We begin by noting that
 \[    \sum_{\uplambda\in \Uplambda^{\rm bas}_{\upvarepsilon} (f ) }\uplambda^{-n} = 
  \upzeta_{T,l}(n) \bar{\tt Q}_{N}^{-n} +  
  \upzeta_{T}(n) \sum^{\infty}_{i= 1}\bar{\tt Q}^{-n}_{N+i}.
 \]
Then if we define 
 \[ H_{N,l}(n):=   \upzeta_{T,l}(n) \bar{\tt Q}_{N}^{-n}+\sum_{\text{Conditions I,II,III}\atop c_{0}(T)\not=0} \left( \sum_{i= 0}^{m} c_{i}(T)\bar{\tt Q}_{N+i}\right)^{-n}\]
 and 
 \[  H_{N}(n) := \upzeta_{T}(n) \sum^{\infty}_{i= 1}\bar{\tt Q}^{-n}_{N+i} +\sum_{\text{Conditions}\atop \text{I,II}} \left( \sum_{i=1}^{m} c_{i}(T)\bar{\tt Q}_{N+i}\right)^{-n} ,\]
we may re-write (\ref{Jepsilonffirstform}) as
 \begin{align}\label{Jepsilonffirstform2}\tilde{J}_{\upvarepsilon}(f) = \frac{ H_{N,l}(q^{2}-1) +  H_{N}(q^{2}-1) }{\left( H_{N,l}(q-1) +  H_{N}(q-1)\right)^{q+1}}.\end{align}
 Replace in (\ref{Jepsilonffirstform2}) each $\bar{\tt Q}_{N+i}$ by $(\bar{f}^{N+i+1}-\bar{f}'^{N+i+1})/c\sqrt{D}$ using (\ref{barbinet}): equivalently (since the numerator and denominator of (\ref{Jepsilonffirstform2})
have homogeneous degree $1-q^{2}$) we may omit the constant $1/c\sqrt{D}$ and simply replace $\bar{\tt Q}_{N+i}$ by $\bar{f}^{N+i+1}-\bar{f}'^{N+i+1}$.
  Then dividing out the numerator and denominator by $\bar{f}^{(1-q^{2})(N+1)}$
yields 
   \begin{align}\label{Jepsilonffirstform3}\tilde{J}_{\upvarepsilon}(f) = \frac{ \hat{H}_{N,l}(q^{2}-1) +  \hat{H}_{N}(q^{2}-1) }{\left( \hat{H}_{N,l}(q-1) +  \hat{H}_{N}(q-1)\right)^{q+1}},\end{align}
  where
  \begin{align*} \hat{H}_{N,l}(n):=  & \upzeta_{T,l}(n) \left(1-(\bar{f}'/\bar{f})^{N+1}\right)^{-n} \\
  &+ \sum_{\text{Conditions I,II,III}\atop c_{0}(T)\not=0} \left( \sum_{i= 0}^{m} c_{i}(T)
   \bar{f}^{i}\left(1-(\bar{f}'/\bar{f})^{N+i+1}\right)
  \right)^{-n}\end{align*}
 and 
  \begin{align*}  \hat{H}_{N}(n) := &  \bigg\{ \upzeta_{T}(n) \sum^{\infty}_{i= 1}\bar{f}^{-ni}\left(1-(\bar{f}'/\bar{f})^{N+i+1}\right)^{-n} \\
  & +\sum_{\text{Conditions}\atop \text{I,II}} \left( \sum_{i=1}^{m} c_{i}(T) \bar{f}^{i}\left(1-(\bar{f}'/\bar{f})^{N+i+1}\right)\right)^{-n}.  \end{align*}
  Since $|\bar{f}'/\bar{f}|<1$,
  \[ 1-(\bar{f}'/\bar{f})^{N+i+1}\longrightarrow 1\] 
  as $N\rightarrow\infty$, uniformly in $i$,  and it follows that 
  \[ \lim_{N\rightarrow\infty} \hat{H}_{N,l}(n) =H_{l}(n), \quad  \lim_{N\rightarrow\infty} \hat{H}_{N}(n) =H(n)  \]
  and therefore
 \[  \lim_{N\rightarrow\infty} \tilde{J}_{q^{-Nd-l}}(f)=\tilde{J}(f)_{l} .\]  
\end{proof}

\begin{note}  In the number field case, PARI GP experiments  \cite{Ge-C} performed on fundamental quadratic units $\uptheta$ indicate that
 $j^{\rm qt}(\uptheta )$ appears to have $D$ values, where $D$ is the corresponding fundamental discriminant. 
\end{note}

\section{Arbitrary Real Quadratics}

  Let $h\in k_{\infty}-k$ be an arbitrary quadratic.  By the Dirichlet Unit Theorem \cite{Co}, the group of units in the quadratic extension $k(h)$ is isomorphic to $\F_{q}^{\times}\times \Z$.  Thus there exists
a unit $f$ such that $k(h)=k(f)$, i.e., $h$ can be written in the form
\begin{align}\label{hintermsoff} h=\frac{x+yf}{z},\quad x,y,z\in A, \end{align}
where $f$ satisfies $X^{2}-aX-b=0$, $d=\deg (a)>0$ and $\deg (b)=0$.  

Note that for $h$ and $f$ as in (\ref{hintermsoff}) and  $\upvarepsilon$ satisfying $|z|\upvarepsilon <1$,  we have the following inclusions of $\F_{q}$-vector spaces 
\begin{align}\label{twoinclusions}
z\Uplambda_{|y|^{-1}\upvarepsilon}(f)\subset \Uplambda_{\upvarepsilon}(h)\subset y^{-1}\Uplambda_{|z|\upvarepsilon}(f).
\end{align} 
 The first inclusion follows upon
noting that if $z\uplambda \in z\Uplambda_{|y|^{-1}\upvarepsilon}(f )$ we have 
\[  \| (z\uplambda) h \| =| (z\uplambda)h-(y\uplambda^{\perp} +\uplambda x) | =| \uplambda(x+yf) -(y\uplambda^{\perp} +\uplambda x) | = |y||\uplambda f-\uplambda^{\perp}|<\upvarepsilon  .\]
The second inclusion follows from noting that if $\uplambda\in  \Uplambda_{\upvarepsilon}(h )$, $y\uplambda\in \Uplambda_{|z|\upvarepsilon}(f)$, as
\[  \| (y\uplambda)f\| = |  (y\uplambda)f-(\uplambda^{\perp}z-\uplambda x) | = |\uplambda(zh-x)-(\uplambda^{\perp}z-\uplambda x) |=
|z||\uplambda h-\uplambda^{\perp}| < |z|\upvarepsilon.\]  

\begin{lemm}\label{bndeddim} The inclusion of $\F_{q}$-vector spaces
\[ z\Uplambda_{|y|^{-1}\upvarepsilon}(f)\subset \Uplambda_{\upvarepsilon}(h)\]
has index bounded by a constant which depends only on $y$ and $z$. 
 \end{lemm}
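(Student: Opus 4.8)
The plan is to bound the index $[\Uplambda_{\upvarepsilon}(h):z\Uplambda_{|y|^{-1}\upvarepsilon}(f)]$ by sandwiching it using the two inclusions already established in (\ref{twoinclusions}). Since
\[ z\Uplambda_{|y|^{-1}\upvarepsilon}(f)\subset \Uplambda_{\upvarepsilon}(h)\subset y^{-1}\Uplambda_{|z|\upvarepsilon}(f),\]
the index in question is at most the index $[y^{-1}\Uplambda_{|z|\upvarepsilon}(f):z\Uplambda_{|y|^{-1}\upvarepsilon}(f)]$. Multiplying through by $y$ (an $\F_q$-linear isomorphism preserving indices), this equals $[\Uplambda_{|z|\upvarepsilon}(f):yz\Uplambda_{|y|^{-1}\upvarepsilon}(f)]$. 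So the whole problem reduces to controlling how the lattice $\Uplambda_{\upvarepsilon'}(f)$ grows as the tolerance $\upvarepsilon'$ is relaxed from $|y|^{-1}\upvarepsilon$ up to $|z|\upvarepsilon$, together with the effect of the scalar $yz\in A$.

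The key step is to make the explicit description of $\Uplambda_{\upvarepsilon'}(f)$ from Lemma \ref{explicitLambda} do the counting. Recall that for $f$ a quadratic unit, $\Uplambda_{q^{-Nd-l}}(f)={\rm span}_{\F_{q}}(\mathcal{B}(N)_{d_l},\mathcal{B}(N+1),\dots)$, so relaxing the tolerance from one threshold $q^{-N'd-l'}$ to a larger one $q^{-Nd-l}$ simply adjoins the finitely many basis blocks $\mathcal{B}(N),\dots,\mathcal{B}(N'-1)$ (and part of a block). The number of these adjoined basis vectors is governed by $\log_q$ of the ratio of the two tolerances, which here is $\log_q(|z|\upvarepsilon)-\log_q(|y|^{-1}\upvarepsilon)=\log_q|z|+\log_q|y|=\deg z+\deg y$, a quantity depending only on $y$ and $z$ and \emph{not} on $\upvarepsilon$. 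First I would round $|y|^{-1}\upvarepsilon$ and $|z|\upvarepsilon$ to the nearest available thresholds in $q^{-\N}$ via the bijection (\ref{errorbijection}), then read off that $[\Uplambda_{|z|\upvarepsilon}(f):\Uplambda_{|y|^{-1}\upvarepsilon}(f)]=q^{e}$ where $e\le \deg y+\deg z+1$, uniformly in $\upvarepsilon$.

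It remains to absorb the scalar multiplication by $yz$. Multiplication by $yz$ is an injective $\F_q$-linear endomorphism of $A$, and on each relevant lattice it multiplies the tolerance scale, so $[\Uplambda_{|z|\upvarepsilon}(f):yz\Uplambda_{|y|^{-1}\upvarepsilon}(f)]=[\Uplambda_{|z|\upvarepsilon}(f):\Uplambda_{|y|^{-1}\upvarepsilon}(f)]\cdot[A:yzA]^{\text{loc}}$, where the contribution of $yz$ is again bounded by a power of $q$ depending only on $\deg(yz)=\deg y+\deg z$. Concretely, I would observe that for any tolerance $\upvarepsilon'$ small enough, $yz\Uplambda_{\upvarepsilon'}(f)\subset\Uplambda_{|yz|\upvarepsilon'}(f)$ with index dividing $|yz|=q^{\deg y+\deg z}$, then chain the two finite-index inclusions together. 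Multiplying indices along the chain $z\Uplambda_{|y|^{-1}\upvarepsilon}(f)\subset\Uplambda_{\upvarepsilon}(h)\subset y^{-1}\Uplambda_{|z|\upvarepsilon}(f)$ yields a bound of the form $q^{c(y,z)}$ with $c(y,z)$ a function of $\deg y$ and $\deg z$ alone.

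The main obstacle I anticipate is bookkeeping the rounding: the tolerances $|y|^{-1}\upvarepsilon$ and $|z|\upvarepsilon$ need not lie exactly on the discrete scale $q^{-\N}$, so one must be careful that snapping them to the nearest admissible thresholds introduces only an $O(1)$ error (at most one extra basis block at each end), and that this $O(1)$ is genuinely independent of $\upvarepsilon$. The other delicate point is confirming that the scalar $yz$, which is not a unit in $A$, contributes only the expected $\deg(yz)$ to the index rather than interacting with the lattice structure in some $\upvarepsilon$-dependent way; this is handled by noting that multiplication by $yz$ has cokernel of size exactly $|yz|$ on all of $A$, hence bounds the local index uniformly. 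Once these two uniformity points are secured, the constant $c(y,z)=2(\deg y+\deg z)+O(1)$ falls out immediately.
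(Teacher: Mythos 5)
Your opening reduction --- sandwiching $\Uplambda_{\upvarepsilon}(h)$ between $z\Uplambda_{|y|^{-1}\upvarepsilon}(f)$ and $y^{-1}\Uplambda_{|z|\upvarepsilon}(f)$ via (\ref{twoinclusions}) --- is also the paper's first move, and your count of the basis elements adjoined when the tolerance relaxes from $|y|^{-1}\upvarepsilon$ to $|z|\upvarepsilon$ (via Lemma \ref{explicitLambda} and the bijection (\ref{errorbijection})) is sound. The genuine gap is the scalar-absorption step: the claim that $yz\Uplambda_{\upvarepsilon'}(f)\subset\Uplambda_{|yz|\upvarepsilon'}(f)$ ``with index dividing $|yz|$'', justified by the size of the cokernel of multiplication by $yz$ on $A$. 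That justification bounds only the factor $[\Uplambda_{|yz|\upvarepsilon'}(f):\Uplambda_{|yz|\upvarepsilon'}(f)\cap yzA]$; it says nothing about the factor $[\Uplambda_{|yz|\upvarepsilon'}(f)\cap yzA : yz\Uplambda_{\upvarepsilon'}(f)]$, i.e.\ about whether $yz\upmu\in\Uplambda_{|yz|\upvarepsilon'}(f)$ forces $\upmu\in\Uplambda_{\upvarepsilon'}(f)$. It does not: the polynomial nearest to $(yz\upmu)f$ need not lie in $yzA$, so the inequality $\|(yz\upmu)f\|<|yz|\upvarepsilon'$ cannot simply be divided by $|yz|$. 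Concretely, take $yz=T$, $\upmu=(\bar{\tt Q}_{1}-\bar{\tt Q}_{1}(0))/T\in A$ and $\upvarepsilon'=q^{-d}$: since $T\upmu$ differs from $\bar{\tt Q}_{1}$ by a constant and $\|cf\|=q^{-d}$ for $c\in\F_{q}^{\times}$ (the nearest polynomial to $f$ being $a$), we get $\|(T\upmu)f\|\le q^{-d}<|T|\upvarepsilon'$; but $\deg\upmu=d-1$, so its expansion in $\mathcal{B}$ has nonzero coefficient on $T^{d-1}\bar{\tt Q}_{0}$, whose error $q^{-1}$ dominates all other components by (\ref{errorbijection}) and the equality case of the ultrametric inequality, whence $\|\upmu f\|=q^{-1}\ge\upvarepsilon'$. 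So $T\upmu\in\Uplambda_{|T|\upvarepsilon'}(f)\cap TA$ while $\upmu\notin\Uplambda_{\upvarepsilon'}(f)$. In fact the index $[\Uplambda_{|yz|\upvarepsilon'}(f):yz\Uplambda_{\upvarepsilon'}(f)]$ is $|yz|^{2}$ up to threshold rounding, not at most $|yz|$ --- note this also makes your ``index dividing $|yz|$'' claim inconsistent with your own heuristic factorization, whose product $q^{\deg y+\deg z}\cdot|yz|$ gives the correct total $q^{2(\deg y+\deg z)}$.

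This divisibility obstruction is exactly what the paper's proof is engineered to avoid, and it avoids it without ever invoking Lemma \ref{explicitLambda}: it passes to the graph $\Uplambda^{2}_{\updelta}(f)=\{(\uplambda,\uplambda^{\perp})\}$ and injects the quotient $y^{-1}\Uplambda^{2}_{|z|\upvarepsilon}(f)/z\Uplambda^{2}_{|y|^{-1}\upvarepsilon}(f)$ into $(y^{-1}A)^{2}/(zA)^{2}$, which has dimension $2\deg(yz)$. The point of doubling the coordinate is that when a difference of pairs equals $z(\upbeta_{1},\upbeta_{2})$, the candidate approximant $yz\upbeta_{2}$ to $(yz\upbeta_{1})f$ is divisible by $yz$ \emph{by construction}, so the inequality $|(yz\upbeta_{1})f-yz\upbeta_{2}|<|z|\upvarepsilon$ may legitimately be divided by $|yz|$ to conclude $(\upbeta_{1},\upbeta_{2})\in\Uplambda^{2}_{|y|^{-1}\upvarepsilon}(f)$ --- precisely the step your single-coordinate argument cannot make. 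If you wish to stay within your framework, there is a repair special to this $f$: writing $A_{\le n}$ for the polynomials of degree at most $n$, Lemma \ref{explicitLambda} gives the exact count $\dim_{\F_{q}}\bigl(\Uplambda_{q^{-Nd-l}}(f)\cap A_{\le n}\bigr)=n+1-(Nd+l)$ for $n$ large, and since multiplication by $yz$ shifts degrees exactly, $yz\Uplambda_{\upvarepsilon'}(f)\cap A_{\le n}=yz\bigl(\Uplambda_{\upvarepsilon'}(f)\cap A_{\le n-\deg(yz)}\bigr)$; comparing these filtered dimensions yields $[\Uplambda_{|z|\upvarepsilon}(f):yz\Uplambda_{|y|^{-1}\upvarepsilon}(f)]=q^{2(\deg y+\deg z)+O(1)}$ uniformly in $\upvarepsilon$, with no appeal to the false intermediate claim. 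The paper's graph argument remains the more robust route, since it nowhere uses the explicit basis of $\Uplambda_{\upvarepsilon}(f)$.
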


\begin{proof}  By (\ref{twoinclusions}), it suffices to show that the induced inclusion 
\[  z\Uplambda_{|y|^{-1}\upvarepsilon}(f)\subset y^{-1}\Uplambda_{|z|\upvarepsilon}(f)\]
is of index bounded by a constant which only depends on $y,z$. 
For any $\updelta<1$, denote 
\[ \Uplambda^{2}_{\updelta}(f) := \{ (\uplambda ,\uplambda^{\perp})\in A^{2}|\; \uplambda\in \Uplambda_{\updelta}(f)  \}.\]
Note that the map $\uplambda \mapsto (\uplambda,\uplambda^{\perp})$ induces an isomorphism $\Uplambda_{\updelta}(f)\cong \Uplambda^{2}_{\updelta}(f)$ of $\F_{q}$-vector spaces:
this in fact follows from Proposition 1 of \cite{DGI}.   In turn, we obtain an induced isomorphism of $y^{-1}$-rescalings 
\[ y^{-1}\Uplambda_{\updelta}(f)\cong y^{-1}\Uplambda^{2}_{\updelta}(f).\]
Taking $\updelta=|z|\upvarepsilon$, the corresponding isomorphism takes
$ z\Uplambda_{|y|^{-1}\upvarepsilon}(f)$ to $ z\Uplambda^{2}_{|y|^{-1}\upvarepsilon}(f) $.
  Thus
\[ y^{-1}\Uplambda_{|z|\upvarepsilon}(f)/z\Uplambda_{|y|^{-1}\upvarepsilon}(f)\cong  y^{-1}\Uplambda^{2}_{|z|\upvarepsilon}(f)/z\Uplambda^{2}_{|y|^{-1}\upvarepsilon}(f) .\]
On the other hand, the natural map
\[  y^{-1}\Uplambda^{2}_{|z|\upvarepsilon}(f)/z\Uplambda^{2}_{|y|^{-1}\upvarepsilon}(f)\hookrightarrow (y^{-1}A)^{2}/(zA)^{2} \]
is injective: for if $(\uplambda_{1},\uplambda^{\perp}_{1}),(\uplambda_{2},\uplambda^{\perp}_{2})\in \Uplambda^{2}_{|z|\upvarepsilon}(f)$ satisfy 
\[ y^{-1}(\uplambda_{1},\uplambda^{\perp}_{1})-y^{-1}(\uplambda_{2},\uplambda^{\perp}_{2}) =z (\upbeta_{1},\upbeta_{2})\in 
 (zA)^{2},\] then since $z (\upbeta_{1},\upbeta_{2})\in y^{-1} \Uplambda^{2}_{|z|\upvarepsilon}(f)$ (being a difference of elements of the latter),  
 \[    | (yz\upbeta_{1})f-yz\upbeta_{2} |< |z|\upvarepsilon\]
 which implies
 $(\upbeta_{1},\upbeta_{2})\in \Uplambda^{2}_{|y|^{-1}\upvarepsilon}(f )$.
Therefore the index of $z\Uplambda_{|y|^{-1}\upvarepsilon}(f)$ in $y^{-1}\Uplambda_{|z|\upvarepsilon}(f) $
is bounded by $\dim_{\F_{q}}((y^{-1}A)^{2}/(zA)^{2})$.
 This proves the Lemma. 
 \end{proof}

\begin{theo}\label{finitenessofjtheo}  Let $h\in k_{\infty}-k$ be quadratic.  Then $\# j^{\rm qt}(h)<\infty$.
\end{theo}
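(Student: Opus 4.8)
The plan is to transfer the finiteness already established for the quadratic unit $f$ (Theorem \ref{finitenesstheospecquad}) to the arbitrary quadratic $h$, using the sandwich (\ref{twoinclusions}) together with the uniform index bound of Lemma \ref{bndeddim}. Write $h=(x+yf)/z$ as in (\ref{hintermsoff}) with $f$ a quadratic unit, $|f|>1$; since $|y|,|z|$ are powers of $q$ it suffices to examine the limits of $j_{\upvarepsilon}(h)$ along $\upvarepsilon=q^{-M}$, $M\to\infty$. Everything entering $j_{\upvarepsilon}(h)$ is determined by the monic elements of the $\F_{q}$-lattice $\Uplambda_{\upvarepsilon}(h)$ through the sums $\upzeta_{h,\upvarepsilon}$, so the goal is to show that these monic elements fall into finitely many asymptotic patterns as $M\to\infty$.

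First I would record a scaling invariance that reduces the ``main part'' of $h$ to the unit case. If $\bar{z}$ denotes the monic associate of $z$, then the monic elements of $z\Uplambda_{|y|^{-1}\upvarepsilon}(f)$ are exactly $\{\bar{z}\mu:\ \mu\in\Uplambda^{\rm mon}_{|y|^{-1}\upvarepsilon}(f)\}$, whence the contribution of this sublattice to $\upzeta_{h,\upvarepsilon}(n)$ equals $\bar{z}^{-n}\upzeta_{f,|y|^{-1}\upvarepsilon}(n)$. Because the factors $\bar{z}^{-(q^{2}-1)}$ in the numerator and $\bar{z}^{-(q-1)(q+1)}$ in the denominator satisfy $(q^{2}-1)-(q-1)(q+1)=0$, they cancel; thus $\tilde J$ evaluated on the sublattice $z\Uplambda_{|y|^{-1}\upvarepsilon}(f)$ alone is precisely $\tilde J_{|y|^{-1}\upvarepsilon}(f)$, which by Theorem \ref{finitenesstheospecquad} converges, as $M\to\infty$, to one of the $d$ values $\tilde J(f)_{l}$, depending only on the residue $l$ determined by $|y|^{-1}\upvarepsilon=q^{-Nd-l}$.

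Next I would account for the bounded-index correction. By Lemma \ref{bndeddim} the quotient $\Uplambda_{\upvarepsilon}(h)/z\Uplambda_{|y|^{-1}\upvarepsilon}(f)$ injects into the fixed finite group $(y^{-1}A)^{2}/(zA)^{2}$, so it realizes one of finitely many subspaces $\tau$; equivalently, the monic elements of $\Uplambda_{\upvarepsilon}(h)$ are those of $z\Uplambda_{|y|^{-1}\upvarepsilon}(f)$ together with boundedly many additional cosets, all lying in $y^{-1}\Uplambda_{|z|\upvarepsilon}(f)$ by (\ref{twoinclusions}). Each such extra element therefore admits a Binet expansion (\ref{barbinet}) in $\bar{f}$, scaled by $y^{-1}$. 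Declaring the \emph{configuration} of $\upvarepsilon$ to be the pair $(l,\tau)$, there are at most $d\cdot\#\{\tau\}<\infty$ configurations, and as $M\to\infty$ the configuration of $q^{-M}$ takes values in this finite set.

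It then remains to prove that, along any subsequence of fixed configuration $(l,\tau)$, $\tilde J_{\upvarepsilon}(h)$ converges. For this I would repeat the computation in the proof of Theorem \ref{finitenesstheospecquad}: substitute (\ref{barbinet}) for every $\bar{\tt Q}_{N+i}$ occurring in the numerator and denominator of $\tilde J_{\upvarepsilon}(h)$, divide out the dominant power of $\bar{f}$, and use $|\bar{f}'/\bar{f}|<1$ to see that each of the boundedly many resulting terms tends to a limit depending only on $(l,\tau)$. The resulting limit $\tilde J(h)_{l,\tau}$ yields a single value $j(h)_{l,\tau}$, and every limit point of $j_{\upvarepsilon}(h)$ is of this form; hence $\# j^{\rm qt}(h)\le d\cdot\#\{\tau\}<\infty$. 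I expect the main obstacle to be exactly this last step: the subspace $\tau$ is recorded in the fixed quotient $(y^{-1}A)^{2}/(zA)^{2}$ rather than in the $\bar{\tt Q}_{N+i}$-expansion, and one must verify that a fixed $\tau$ pins down a stable pattern of extra monic cosets across all large $N$, so that the limiting argument of Theorem \ref{finitenesstheospecquad} applies uniformly within each configuration.
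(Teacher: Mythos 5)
Your overall architecture---the coset decomposition of $\Uplambda_{\upvarepsilon}(h)$ over $z\Uplambda_{|y|^{-1}\upvarepsilon}(f)$ via Lemma \ref{bndeddim}, followed by the Binet substitution and rescaling by $\bar{f}^{N+1}$ exactly as in Theorem \ref{finitenesstheospecquad}---is the paper's, and your preliminary reductions (monic associates $\bar{z}\mu$, the homogeneity cancellation $(q^{2}-1)-(q-1)(q+1)=0$, restriction to $\upvarepsilon=q^{-Nd-l}$) are all sound. But the gap you flagged yourself at the end is genuine, and it is not a technicality: the configuration $(l,\tau)$, with $\tau$ the image subspace in the fixed finite group $(y^{-1}A)^{2}/(zA)^{2}$, is too coarse an invariant. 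Fixing $\tau$ fixes neither the coset representatives $\uplambda_{i,\upvarepsilon}$ nor their expansions in the $\bar{\tt Q}_{j}$ basis; two values of $\upvarepsilon$ in the same configuration can carry representatives with different coefficient polynomials, hence different rescaled quantities $z^{-1}\uplambda_{i,\upvarepsilon}/\bar{f}^{N+1}$, so your claim that $\tilde{J}_{\upvarepsilon}(h)$ \emph{converges} along any fixed-configuration subsequence is unsubstantiated, and there is no apparent reason it should hold.

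The paper closes this by choosing a different finite invariant and by proving less. By (\ref{twoinclusions}), each nontrivial representative lies in $y^{-1}\Uplambda_{|z|\upvarepsilon}(f)-z\Uplambda_{|y|^{-1}\upvarepsilon}(f)$, hence by (\ref{formoftheai}) can be written $\uplambda_{i,\upvarepsilon}=y^{-1}\sum_{j=m}^{m'}c_{ij,\upvarepsilon}(T)\bar{\tt Q}_{j}$ with $\deg c_{ij,\upvarepsilon}(T)\leq d-1$ and with $m'-m$ bounded uniformly in $\upvarepsilon$ (Lemma \ref{bndeddim} again), so that after rescaling the possible limits of $z^{-1}\uplambda_{i,\upvarepsilon}/\bar{f}^{N+1}$ lie in the explicit finite set $\{(yz)^{-1}\sum_{j=-n}^{n'}c_{j}(T)\bar{f}^{j}:\deg c_{j}(T)\leq d-1\}$. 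No convergence along configuration classes is claimed or needed: any subsequential limit of $\tilde{J}_{\upvarepsilon}(h)$ arises, after passing to a further subsequence on which the finitely many possible coefficient data are eventually constant, from the limiting argument of Theorem \ref{finitenesstheospecquad}, and therefore the set of limit points is finite. To repair your write-up, replace $\tau$ by the tuple of coefficient data $(c_{ij,\upvarepsilon}(T))$ together with the window $m'-m$, and pigeonhole on this finite set; this converts your admitted ``main obstacle'' into the actual finiteness count, at the cost of abandoning the (unnecessary) assertion that each configuration class yields a convergent subsequence.
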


\begin{proof}  
By Lemma \ref{bndeddim},
for each $\upvarepsilon$, we may find $\uplambda_{1,\upvarepsilon}=0, \dots , \uplambda_{r,\upvarepsilon}\in \Uplambda_{\upvarepsilon}(h )$,
 $r<M=M(y,z)$, such that
\begin{align}\label{cosetdecomp} \Uplambda_{\upvarepsilon}(h ) =   \big[  z\Uplambda_{|y|^{-1}\upvarepsilon}(f)+\uplambda_{1,\upvarepsilon}\big] \bigsqcup \cdots \bigsqcup
\big[ z\Uplambda_{|y|^{-1}\upvarepsilon}(f)+\uplambda_{r,\upvarepsilon}\big] .\end{align}
Thus,  we have
\begin{align*} \tilde{J}_{\upvarepsilon}(h)  = \frac{\upzeta_{h,\e}(q^{2}-1)}{\upzeta_{h,\e}(q-1)^{q+1}} & =- \frac{\sum_{\uplambda\in \Uplambda_{\e}(h)-0} 
\uplambda^{1-q^{2}}}{\left(\sum_{\uplambda\in \Uplambda_{\e}(h)-0} 
\uplambda^{1-q}\right)^{q+1}} \\
&=-
\frac{ \sum_{i=1}^{r}\sum_{\uplambda\in \Uplambda_{|y|^{-1}\upvarepsilon}(f ) } (\uplambda + \uplambda_{i,\upvarepsilon}/z)^{1-q^{2}}  }{\left( \sum_{i=1}^{r}\sum_{\uplambda\in \Uplambda_{|y|^{-1}\upvarepsilon}(f ) } (\uplambda + \uplambda_{i,\upvarepsilon}/z)^{1-q} \right)^{1+q}}.
\end{align*}
By (\ref{twoinclusions}), for $i\not=1$,
\begin{align}\label{formofai}
\uplambda_{i,\upvarepsilon}\in  y^{-1}\Uplambda_{|z|\upvarepsilon}(f )-z\Uplambda_{|y|^{-1}\upvarepsilon}(f),
\end{align}
so we may write
\begin{align}\label{formoftheai}  \uplambda_{i,\upvarepsilon} = y^{-1}\sum_{j=m}^{m'}  c_{ij,\upvarepsilon}(T)\bar{\tt Q}_{i} ,\end{align}
where $\deg (c_{ij}(T))\leq d-1$ and $m$ is the smallest index so that $\bar{\tt Q}_{m}\in \Uplambda_{|z|\upvarepsilon}(f )$. 
Note that by (\ref{formofai}) and Lemma \ref{bndeddim}, the difference
$m'-m$ has a uniform bound which
is independent of $\upvarepsilon$.
In particular, the set of all possible coefficients $c_{ij,\upvarepsilon}(T)$ is finite in number.

Consider a sequence $\{ q^{-Nd-l}\}_{N>1}$ of values for $\upvarepsilon$ giving a limiting value for $j^{\rm qt}(f)$ as described in Theorem \ref{finitenesstheospecquad}: recall that the limit is calculated by using Binet's formula to replace occurrences of $\bar{\tt Q}_{N+i}$ by $\bar{f}^{N+i+1}$ in $\tilde{J}_{\upvarepsilon}(f)$ and dividing
out by $\bar{f}^{(1-q^{2})(N+1)}$.  
We apply exactly the same
process to $\tilde{J}_{\upvarepsilon}(h)$: and in view of (\ref{formoftheai})
we obtain approximations
\begin{align}\label{aiapprox} z^{-1} \frac{\uplambda_{i,\upvarepsilon}}{\bar{f}^{N+1}} \sim (yz)^{-1} \sum_{j=m}^{m'} c_{ij,\upvarepsilon}(T)\bar{f}^{j-N} ,\quad m,m'\geq 0 \end{align}
where $\deg c_{ij,\upvarepsilon}(T)\leq d-1$.  Therefore, the set of possible limits of the $z^{-1} \frac{\uplambda_{i,\upvarepsilon}}{\bar{f}^{N+1}}$ as $N\rightarrow \infty$ is contained
in the finite set
\[  \left\{ \left. (yz)^{-1} \sum_{j=-n}^{n'} c_{j}(T)\bar{f}^{j}\right|\quad \deg(c_{j}(T))\leq d-1\right\} \]
where $n'+n = m'-m$.
Thus, within the family $\{ \upvarepsilon=q^{-Nd-l}\}_{N>1}$, there are only finitely many possible limits of sub-sequences of 
  $\{ \tilde{J}_{\upvarepsilon}(h) \}$
giving rise to elements of $j^{\rm qt}(h )$.
This proves the Theorem.
 \end{proof}

\end{document}